\documentclass[12pt,reqno]{amsart}

\usepackage{amssymb}
\usepackage{amscd}
\usepackage{amsfonts}
\usepackage{setspace}
\usepackage{version}

\usepackage{graphicx}


\newtheorem{theorem}{Theorem}[section]
\newtheorem{lemma}[theorem]{Lemma}

\theoremstyle{definition}

\renewcommand{\leq}{\leqslant}
\renewcommand{\geq}{\geqslant}

\def\F{\mathbb{F}}

\def\Z{\mathbf{Z}}

\def\P{\mathbb{P}}

\def\eps{\varepsilon}

\title{An example concerning set addition in $\F_2^n$}
\author{Ben Green}
\email{ben.green@maths.ox.ac.uk}
\address{Mathematical Institute, University of Oxford}
\author{Daniel Kane}
\email{dakane@ucsd.edu}
\address{Department of Mathematics, University of California at San Diego}

\numberwithin{equation}{section}

\begin{document}
\maketitle

\begin{abstract}
We construct sets $A, B$ in a vector space over $\F_2$ with the property that $A$ is ``statistically'' almost closed under addition by $B$ in the sense that $a + b$ almost always lies in $A$ when $a \in A, b \in B$, but which is extremely far from being ``combinatorially'' almost closed under addition by $B$: if $A' \subset A$, $B' \subset B$ and $A' + B'$ is comparable in size to $A'$ then $|B'| \lessapprox |B|^{1/2}$.
\end{abstract}

\section{Introduction}
The aim of this note is to prove the following result, which is a precise version of the claim made in the abstract.

\begin{theorem}\label{mainthm}
Let $0 < \delta < 1 < K$. Then there are arbitrarily large finite sets $A, B$ in some vector space over $\F_2$ with the property that $\P(a + b \in A | a \in A, b \in B) \geq 1 - \delta$, but such that if $A' \subset A$ and $B' \subset B$ satisfy $|A' + B'| \leq K|A'|$ then $|B'| = O_{K, \delta}(|B|^{1/2})$.
\end{theorem}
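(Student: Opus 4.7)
My plan is to construct the sets $A$ and $B$ explicitly. The bound $|B'| \lessapprox |B|^{1/2}$ strongly suggests that $B$ should carry a ``rank-one'' additive structure on scale $|B|^{1/2}$: that is, $B$ is essentially the union of roughly $|B|^{1/2}$ translates of a fixed subspace $H$ of size $|B|^{1/2}$. Accordingly I would work in $V = \F_2^n$ for $n$ large, fix a subspace $H \subset V$ with $\log_2 |H| \approx \tfrac{1}{2} \log_2 |B|$, choose a set $T \subset V$ of size $|T| \approx |H|$ with minimal additive structure (for instance a Sidon set, or a random set), and take $B = H + T$. The set $A$ would be taken to be exactly $H$-invariant --- so that $B' = H$ already witnesses that the bound $|B'| \lessapprox |B|^{1/2}$ is essentially tight --- with its image in $V/H$ statistically closed under addition by $\pi(T)$, where $\pi : V \to V/H$ is the quotient map.

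The first step is to verify the statistical closure. Writing $A_0 = \pi(A) \subset V/H$, the $H$-invariance of $A$ reduces the desired inequality to
\[
\P(a_0 + \pi(t) \in A_0 \mid a_0 \in A_0, t \in T) \geq 1 - \delta.
\]
Taking $A_0$ to be a random subset of $V/H$ of density $1 - \delta/2$ (with $|V/H|$ much larger than $|T|$) and applying a standard concentration argument, this inequality holds with high probability.

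The second step is the combinatorial bound. Suppose $A' \subset A$ and $B' \subset B$ satisfy $|A' + B'| \leq K|A'|$. A dyadic pigeonhole argument produces a subset $A'' \subset A'$ on which the $H$-fibers have roughly uniform size, so that the sumset condition descends to the quotient: $|\pi(A'') + \pi(B')| \lesssim_K |\pi(A'')|$. Since $\pi(B') \subset \pi(T)$ lives inside a set with essentially no Sidon-violating quadruples, an additive-energy (Cauchy--Schwarz) argument in $V/H$ gives $|\pi(A'') + \pi(B')| \gtrsim \min(|\pi(A'')| \cdot |\pi(B')|, |\pi(B')|^2)$; combining this with the projected sumset inequality forces $|\pi(B')| = O_{K, \delta}(1)$. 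Summing over fibers of $\pi$, $|B'| \leq |H| \cdot |\pi(B')| = O_{K, \delta}(|H|) = O_{K, \delta}(|B|^{1/2})$.

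The main obstacle I expect is strengthening the Sidon-style energy estimate so that it really yields $|\pi(B')| = O_{K, \delta}(1)$, rather than the weaker $|\pi(B')| \lesssim \sqrt{K|\pi(A'')|}$ produced by a naive Cauchy--Schwarz. This is likely to require exploiting the \emph{genericity} of the random set $A_0$ (so that it does not accidentally correlate with translates of $T$), or else replacing the generic Sidon set $T$ with a much more structureless object. Calibrating the construction of $T$ and $A_0$ so that both the statistical closure and the combinatorial bound hold at the right quantitative strength is the heart of the argument.
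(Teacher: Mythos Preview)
Your construction has a fatal flaw at precisely the point you flag as the ``main obstacle'', but the difficulty is worse than a missing energy estimate: the two requirements are in direct conflict. For the concentration argument to give $\P(a_0 + \pi(t) \in A_0 \mid a_0 \in A_0) \geq 1 - \delta$, the random set $A_0$ must have density $\alpha \geq 1 - O(\delta)$ in $V/H$ (for a random set of density $\alpha$, a typical nonzero shift lands back in $A_0$ with probability $\approx \alpha$). But then $A$ itself has density $\alpha$ in $V$, and the combinatorial conclusion collapses: with $A' = A$ and $B' = B$ one has
\[
|A' + B'| \leq |V| = \alpha^{-1}|A'| \leq 2|A'|,
\]
so for every $K \geq 2$ the hypothesis $|A' + B'| \leq K|A'|$ is satisfied with $|B'| = |B|$, not $O(|B|^{1/2})$. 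No Sidon structure on $T$ can rescue this, because the ambient space simply has no room for $A + B'$ to grow. The same objection defeats any variant in which statistical closure is achieved by making $A$ dense; the ``genericity of $A_0$'' you hope to exploit is exactly what kills the combinatorial half.

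The paper's construction avoids this trap by making $A$ \emph{exponentially sparse} in its ambient space. It works in $V = (\F_2^m)^m$, takes $B$ to be the $m^2$ standard basis vectors $e_{i,j}$, and takes $A$ to be the $m$-fold direct product of a fixed band $\Sigma$ (a union of $k \sim 1/\delta$ consecutive Hamming layers around the middle) in $\F_2^m$. Statistical closure comes not from density but from local geometry: adding a single $e_{i,j}$ can move the $j$th coordinate out of $\Sigma$ only if that coordinate was already on one of the two extreme layers, which happens with probability $\leq 2/k < \delta$. For the combinatorial half, a Pl\"unnecke--Ruzsa step upgrades $|A' + B'| \leq K|A'|$ to $|A'' + kB'| \leq (2K)^{O(k)}|A''|$ for some large $A'' \subset A'$, and the key input is a crude isoperimetric lemma: any subset of a width-$k$ band in $\F_2^D$ with $D > 10k^2$ loses at least half its mass when shifted by $k$ standard basis vectors. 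Hence if $B'$ contains more than $10k^2$ vectors $e_{i,j}$ with a fixed second index $j$, then $A'' + kB'$ escapes $A$ in the $j$th coordinate on a set of size $\geq \tfrac12|A''|$; since these escapes are disjoint for distinct $j$, at most $(2K)^{O(k)} = O_{K,\delta}(1)$ indices $j$ can be of this kind, giving $|B'| \leq O_{K,\delta}(1)\cdot m + 10k^2\cdot m = O_{K,\delta}(|B|^{1/2})$.
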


The sets are constructed as follows. For positive integer $m$ we take $A$ to consist of a direct product of  $m$ ``bands'' (union of a few consecutive Hamming layers) in copies of $\F_2^m$. Then we take $B$ to consist of the standard basis vectors in $(\F_2^m)^m$. The details may be found in Section \ref{sec3}.

Whilst the construction is fairly simple, neither the construction nor its analysis seemed quite obvious to us and so we thought it worth recording in this short note.

\section{A crude isoperimetric inequality for bands}

By a \emph{band} of width $k$ in $\F_2^D$ we mean the union $L_{\ell} \cup \dots \cup L_{\ell + k - 1}$ of $k$ consecutive layers, where we define the layer $L_d := \{x \in \F_2^D : |x| = d\}$, where $|x|$ is the number of nonzero coordinates of $x$.

\begin{lemma}\label{lem2.1}
Suppose that $A \subset \F_2^D$ is contained in some band $\Sigma$ of width $k$, where $D > 10k^2$. Let $S = \{e_1,\dots, e_D\}$ be the set of standard basis vectors in $\F_2^D$. Then 
\[ |(A + kS) \setminus \Sigma| \geq \frac{1}{2}|A|.\]
\end{lemma}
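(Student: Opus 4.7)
The plan is to decompose $A$ by Hamming layer, exhibit $k$-step upward and downward shadows in each layer as subsets of $(A + kS) \setminus \Sigma$, then bound their sizes by a Kruskal--Katona type shadow inequality. Write $A_d = A \cap L_d$ for $d \in \{\ell, \ldots, \ell + k - 1\}$, and, identifying vectors in $\F_2^D$ with their supports in $\{1, \ldots, D\}$, let $A_d^+$ be the set of $y \in L_{d+k}$ containing some $x \in A_d$, and $A_d^-$ the set of $y \in L_{d-k}$ contained in some $x \in A_d$, with the conventions $A_d^+ = \emptyset$ if $d + k > D$ and $A_d^- = \emptyset$ if $d < k$. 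Every $y \in A_d^+$ has the form $a + e_{i_1} + \cdots + e_{i_k}$ with distinct $i_j$ avoiding the support of $a$, and similarly for $A_d^-$, so both are subsets of $A + kS$. All $2k$ of them live in layers $L_m$ with $m \in [\ell-k, \ell-1] \cup [\ell+k, \ell+2k-1]$, which are pairwise distinct and disjoint from $\Sigma$, so they are pairwise disjoint subsets of $(A+kS)\setminus\Sigma$.

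Second, I would apply the normalized matching property: a short double count on pairs $(x,y)$ with $x \in A_d$, $x \subset y$, $|y| = d + k$ gives $|A_d^+|\binom{d+k}{k} \geq |A_d|\binom{D-d}{k}$, equivalently $|A_d^+| \geq u_d|A_d|$ where $u_d := \binom{D}{d+k}/\binom{D}{d}$. Symmetrically $|A_d^-| \geq v_d|A_d|$ with $v_d := \binom{D}{d-k}/\binom{D}{d}$. Summing $|A_d^+| + |A_d^-| \geq (u_d + v_d)|A_d|$ over $d$ reduces the lemma to the purely binomial inequality $u_d + v_d \geq 1/2$ for every $d \in [\ell, \ell + k - 1]$.

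Finally, I would verify this inequality by splitting into three ranges. If $d \leq (D-k)/2$ then $\binom{D}{d+k} \geq \binom{D}{d}$, so $u_d \geq 1$; symmetrically $v_d \geq 1$ when $d \geq (D+k)/2$. The only nontrivial range is $|d - D/2| < k/2$, which is where the hypothesis $D > 10 k^2$ enters. Writing $u_d = \prod_{j=1}^{k}(D-d-j+1)/(d+j)$, every factor exceeds $1 - (2j + k - 1)/(D/2)$, and Bernoulli's inequality $\prod(1 - x_j) \geq 1 - \sum x_j$ yields $u_d \geq 1 - O(k^2/D)$, which is at least $1/2$ under the hypothesis. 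This bookkeeping in the middle range is the only real technical point; the rest is just the natural picture that $kS$ shifts Hamming weight by up to $k$ in either direction, and that the Kruskal--Katona shadow bound controls how much mass lands in each shifted layer.
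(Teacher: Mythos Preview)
Your proof is correct and follows essentially the same approach as the paper's: both decompose $A$ by Hamming layer, use the identical double-counting argument to bound the $k$-step shadow of each $A_d$ by the binomial ratio $\binom{D-d}{k}/\binom{d+k}{k}$, and then invoke $D > 10k^2$ to show this ratio is at least $\tfrac{1}{2}$. The only difference is organizational: the paper splits into two cases according to whether $\ell \leq D/2$ or $\ell > D/2$ and uses only the upward (respectively downward) shadow uniformly for every layer of the band, whereas you carry both shadows $A_d^{\pm}$ simultaneously and split into three ranges of $d$ itself.
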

\begin{proof}
We begin by noting that if $x \in L_i$ then there are $\binom{D - i}{k}$ elements $y \in L_{i + k}$ differing from $x$ by an element of $kS$. Each such element arises from $\binom{i+k}{k}$ choices of $x$. It follows by double counting that if $A_i \subset L_i$ is some set then
\[ |(A_i + kS) \cap L_{i + k}| \geq \frac{\binom{D - i}{k}}{\binom{i + k}{k}}  |A_i|.\]
If $i \leq D/2$ then we have
\[ \frac{\binom{D - i}{k}}{\binom{i + k}{k}} \geq \big(\frac{\frac{D}{2} - k}{\frac{D}{2} + k}   \big)^k \geq (1 - \frac{4k}{D})^k > \frac{1}{2}.\] Thus if $i \leq D/2$ then
\[ |(A_i + kS) \cap L_{i + k}| \geq \frac{1}{2}|A_i|.\]

Suppose that $\Sigma = L_{\ell} \cup \dots \cup L_{\ell + k - 1}$, where $\ell \leq D/2$. Then, applying the preceding inequality with $i = \ell, \dots, \ell + k - 1$ in turn and with $A_i := A \cap L_i$, the claimed result follows by noting that all of the layers $L_{i + k}$ lie outside $\Sigma$.

If $\ell > D/2$ then a very similar argument works, but we instead use the inequality
\[ |(A_i + kS) \cap L_{i - k}| \geq \frac{\binom{i}{k}}{\binom{D + k - i}{k}}|A_i| \geq \frac{1}{2}|A_i|\] for $i \geq D/2$.

\end{proof}

\section{The example}\label{sec3}

Let $0 < \delta < 1 < K$, as in the statement of Theorem \ref{mainthm}. Let $k = 2k' + 1$ be odd and of size $\sim \frac{4}{\delta}$, and let $m = 2m'$ be even with $m > 10k^2$. Consider the vector space $V = (\F_2^m)^m$, and let $B \subset V$ be the standard basis consisting of elements $(e_{i,j})_{i,j \in [m]}$.  Let $A \subset V$ consist of the direct product of $m$ copies of the band $\Sigma := L_{m'- k'} \cup \cdots \cup L_{m' + k'}$ in $\F_2^m$. 

It is straightforward to show that $A$ is almost closed under addition by $B$ in the ``statistical'' sense. 

\begin{lemma}\label{lem3.1}
We have $\P(a + b \in A | a \in A, b \in B) \geq 1 - \frac{2}{k} > 1 - \delta$. 
That is, 
\[ \big| \{ a \in A, b \in B : a + b \in A\}\big|   > (1 - \delta) |A| |B|.\]
\end{lemma}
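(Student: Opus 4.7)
The plan is to exploit the product structure of $A = \Sigma^m$ together with the fact that every $b \in B$ is a standard basis vector $e_{i,j}$ in $(\F_2^m)^m$, hence touches exactly one of the $m$ ``slot'' coordinates of $V$. Writing $a = (a_1, \dots, a_m)$, we have $a + b \in A$ if and only if $a_i + e_j \in \Sigma$, so the entire question collapses to a single-coordinate calculation inside $\F_2^m$: for uniform $a_i \in \Sigma$ and uniform $j \in [m]$, what is the probability that $a_i + e_j$ leaves $\Sigma$?

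First I would classify the ``bad'' events. Flipping bit $j$ of $a_i$ changes $|a_i|$ by $+1$ or $-1$ according to whether that bit is $0$ or $1$, so $a_i + e_j$ escapes the band $\Sigma = L_{m' - k'} \cup \dots \cup L_{m' + k'}$ only if $|a_i|$ sits on an extreme layer and the flip pushes it out: either $|a_i| = m' + k'$ and bit $j$ of $a_i$ is $0$, or $|a_i| = m' - k'$ and bit $j$ of $a_i$ is $1$. Second, I would count these bad configurations. For fixed $j$, the number of bad $a_i \in \Sigma$ is $\binom{m-1}{m' + k'} + \binom{m-1}{m' - k' - 1}$, and by the symmetry $m = 2m'$ both terms equal $\binom{m-1}{m' + k'}$. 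This count is independent of $(i,j)$, which gives
\[ \P(a + b \notin A \mid a \in A, b \in B) = \frac{2 \binom{m-1}{m' + k'}}{|\Sigma|}. \]

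The last step is to bound $|\Sigma|$ below by $k \binom{m-1}{m' + k'}$, which yields the target probability at most $2/k < \delta$ by the choice $k \sim 4/\delta$. This should follow from unimodality of $\binom{m}{d}$ around $d = m'$: each of the $k = 2k' + 1$ terms in $|\Sigma| = \sum_{d = m' - k'}^{m' + k'} \binom{m}{d}$ is at least the extreme value $\binom{m}{m' + k'}$, which in turn dominates $\binom{m-1}{m' + k'}$ by Pascal's identity. No step presents any real obstacle; the only thing to watch is the bookkeeping in the bad-pair count and making sure the binomial identities line up cleanly with the $k$ summands in $|\Sigma|$.
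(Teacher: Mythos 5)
Your proof is correct and follows essentially the same route as the paper: reduce to a single coordinate of $(\F_2^m)^m$ via the product structure of $A$ and the basis structure of $B$, observe that only the two extreme layers $L_{m'\pm k'}$ of the band can be pushed out by flipping a bit, and bound their relative weight in $\Sigma$ using unimodality of the binomial coefficients, giving the same $1-\frac{2}{k}$ bound. Your exact count of bad pairs (keeping track of whether bit $j$ is $0$ or $1$ on the extreme layers) is a slight refinement of the paper's sufficient condition, but the structure of the argument is identical.
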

\begin{proof}
By symmetry it suffices to show that $\P(a + e_{1,1} \in A | a \in A) \geq 1 - \frac{2}{k}$. If the projection of $a$ to the first copy of $\F_2^m$ lies in $L_{m' - k' + 1} \cup \cdots \cup L_{m' + k' - 1}$, then it is evidently the case that $a + e_{1,1} \in A$. The probability of this event is precisely
\[ \frac{ |L_{m' - k' + 1} \cup \cdots \cup L_{m' + k' - 1}|}{|L_{m' - k'} \cup \cdots \cup L_{m' + k'}|} \geq \frac{2k' - 1}{2k' + 1} = 1 - \frac{2}{k},\] since the sizes of the layers decrease away from the middle layer.
\end{proof}

Shortly we will turn to the somewhat less immediate task of showing that $|A' + B'|$ is much bigger than $|A'|$ unless $|B'|$ is of size $\ll m = |B|^{1/2}$. We note that there are two quite distinct examples of sets $B'$ of size $m$ for which there is some $A' \subset A$ almost closed under addition by $B'$. Indeed
\begin{itemize}
\item If $B' = \{e_{i,1}\}_{i \in [m]}$ then $A + B'$ consists of the product of the band of width $(k+2)$, $L_{m' - k' - 1} \cup \cdots \cup L_{m' + k' + 1}$, with $(m-1)$ copies of the usual band of width $k$. By a computation very similar to that in the proof of Lemma \ref{lem3.1}, we have $|A + B'| \leq (1 + O(\frac{1}{k})) |A|$.
\item If $B' = \{e_{1,j}\}_{j \in [m]}$ then we may take $A' \subset A$ to be the product of some $e_{1,1}$-invariant set in the first factor with an $e_{1,2}$-invariant set in the second factor and so on to obtain $A' + B' = A'$. Note that any band in $\F_2^m$ of width $\geq 2$ does contain a set invariant under a given basis vector $e_{i,j}$.
\end{itemize}
These examples, in addition to showing that we cannot improve upon our bound of $|B'| \lessapprox |B|^{1/2}$, should also convince the reader that there is nothing to be gained by taking our set $A$ to live inside $(\F_2^m)^r$ with differing values of $m,r$: if $m > r$ then examples of the first type are worse, whilst if $m < r$ then examples of the second type are bad.\vspace{11pt}

Suppose that 
\begin{equation}\label{supposition}  |A' + B'| \leq K|A'|\end{equation} for some $A' \subset A$ and $B' \subset B$. 
We begin by applying a variant of the Pl\"unnecke-Ruzsa inequality due to Ruzsa \cite[Chapter 1, Proposition 7.3]{ruzsa}. A special case of this theorem tells us that if $|A' + B'| \leq K|A'|$ then for any positive integer $k$ there is $A'' \subset A'$, 
\begin{equation}\label{add-lower} |A''| \geq \frac{1}{2}|A'|,\end{equation} such that 
\begin{equation}\label{adashdash} |A'' + kB'| \leq 2^{k+1}K^k |A''|.\end{equation}
Now since $B' \subset B$ we have
\[ B' = \bigcup_{j = 1}^m B'_j,\] where
\[ B'_j := \bigcup_{i \in I_j} \{ e_{i,j} \}\]  for some set $I_j \subset [m]$.
Say that $j$ is \emph{good} if $|I_j| > 10 k^2$, and write $J \subset [m]$ for the set of good $J$. We claim that if $j$ is good then
\begin{equation}\label{claim-grow} |(A'' + kB'_j) \setminus A| \geq \frac{1}{2}|A''|.\end{equation}
The sets $(A'' + k B'_j) \setminus A$ are disjoint for different $j$, as all the elements of such a set, considered as a subset of $(\F_2^m)^m$, have their $j$th coordinate, and no other, lying outside the band $\Sigma$. It follows from this and \eqref{adashdash} that 
\[ J \leq 2^{k+2} K^k,\] and therefore we have
\[ |B'| = \sum_{j = 1}^m |I_j| \leq 10k^2 m + J m \ll_{K,\delta} m.\] 
It remains to establish the claim \eqref{claim-grow}. To do this, identify $B'_j$ with the standard basis vectors in $\F_2^{I_j}$, and write $V$ as a direct product $\F_2^{I_j} \times V'$. Then $A,A''$ may be fibred as unions
\[ A = \bigcup_{v \in V'} (A(v) \times \{v\})\]
\[ A'' = \bigcup_{v \in V'} (A''(v) \times \{v\})\] 
where $A''(v) \subset A(v) \subset \F_2^{I_j}$ and, crucially, each $A(v)$ is a band of width $k$ in $\F_2^{I_j}$. By Lemma \ref{lem2.1} (with $D = |I_j| > 10k^2$) it follows that 
\[ |(A''(v) + kB'_j) \setminus A(v)| \geq \frac{1}{2}|A''(v)|,\] from which the claim follows by summing over $v$.

\section{Acknowledgements} The first author thanks Shachar Lovett for asking a question which led to this note. Lovett (personal communication) subsequently informed us that Hosseini, Impagliazzo and he observed that easier examples are available in other ambient spaces. For example, $A = \{0,1\}^n$, $B = \{e_1,\dots, e_n\}$ (viewed as subsets of $\Z^n$, or of $\F_p^n$ for any $p \geq 3$) have similar properties. In fact for this example we have $\P(a + b \in A| a \in A, b \in B) = \frac{1}{2}$ and $|A' + B'| \leq K|A'|$ implies that $|B'| \ll_{K} 1$. A somewhat similar example is mentioned in \cite[Exercise 2.6.2]{tao-vu}.

The authors thank MSRI and the Simons Institute for providing excellent working conditions while this work was carried out. The first author is a Simons Investigator. The second author is supported by NSF CAREER award number 1553288.


\begin{thebibliography}{99}
\bibitem{ruzsa} I.~Z.~Ruzsa, \emph{Sumsets and structure} in Combinatorial number theory and additive group theory, 87--210, Adv. Courses Math. CRM Barcelona, Birkh\"auser Verlag, Basel, 2009.

\bibitem{tao-vu} T.~Tao and V~.Vu, \emph{Additive Combinatorics}, Cambridge Studies in Advanced Math \textbf{105}, 2006.
\end{thebibliography}
\end{document}